\documentclass[a4paper, 10pt]{amsart}

\usepackage{amsmath,amsthm,amssymb}
\usepackage{mathtools}
\usepackage{enumerate}
\usepackage{ulem}
\usepackage[driverfallback=dvipdfm]{hyperref}
\usepackage[dvipdfmx]{graphicx}
\usepackage{bbold}
\usepackage{xcolor}

\usepackage{dsfont}
\usepackage{comment}
\usepackage{tikz-cd}

\theoremstyle{plain}
	\newtheorem{thm}{Theorem}[section]
	\newtheorem{cor}{Corollary}[section]
	\newtheorem{lem}{Lemma}[section]
	\newtheorem{prop}{Proposition}[section]
	\newtheorem{rmk}{Remark}
    \newtheorem{prob}{Problem}

\theoremstyle{definition}

\makeatletter
	
	\@addtoreset{equation}{section}
\makeatother

\def\A{\text{AC}}

\title[Isometries on ordered Banach spaces of absolutely continuous functions]{A variant of Tingley's problem on ordered Banach spaces of absolutely continuous functions}

\author[M.--R. Lin]{Min--Ruei Lin}
\address[M.--R. Lin]
{Department of Applied Mathematics,
National Sun Yat--sen University,
Kaohsiung, 80424, Taiwan;
And
Graduate School of Science and Technology, Niigata University, Niigata 950--2181, Japan}
\email{m082030021@student.nsysu.edu.tw}

\subjclass[2020]{Primary 46B04; Secondary 46B40, 46E15.}

\keywords{Tingley's problem, positive unit sphere, surjective isometry,
spaces of absolutely continuous functions, ordered Banach space,
isometric order isomorphism, phase--isometry}

\begin{document}

\begin{abstract}
For each $1\le p\le\infty$ and $j=1,2$, let $AC^p(\Omega_j)$ denote the Banach space of complex-valued absolutely continuous functions on a closed unit interval $\Omega_j=[x_j,x_j+1]$.
We equip $AC^p(\Omega_j)$ with the $p$--norm $\|f\|_{\A,p}$, and the order $\ge_{\A}$ defined by $f(x_j)\ge0$ and $f'\ge 0$ a.e.
Set
$$S(AC^p(\Omega_j))^+=\{f\in AC^p(\Omega_j):\|f\|_{\A,p}=1,\  f\ge_{\A}0\}.$$
We prove that, for each $1\le p\le\infty$, every surjective isometry $S(AC^p(\Omega_1))^+\to S(AC^p(\Omega_2))^+$ extends uniquely to a complex--linear isometric order isomorphism from $AC^p(\Omega_1)$ onto $AC^p(\Omega_2)$.
As an application, we obtain a corresponding extension theorem for surjective phase--isometries.
\end{abstract}

\maketitle

\section{Introduction}
Tingley's problem~\cite{T87} asks whether every surjective isometry $T:S(E)\to S(F)$ between the unit sphere of two Banach spaces extends uniquely to a surjective linear isometry between the Banach spaces $E$ and $F$.
The problem has attracted considerable attention for function spaces~\cite{cue1, cue2, hat1, hat2, hat3, hiro, wang1, wang2, wang3, tan1, tan2}.
To date, no counterexample is known.
In particular, Tingley's problem has a positive answer for $L^p(\mu)$--spaces, when $\mu$ is $\sigma$--finite~\cite{tan1,tan2}.
However, results on absolutely continuous function spaces remain limited~\cite{Hos18, Pat82}.

A related direction is to replace the whole unit sphere by its positive part.
This is natural for ordered Banach spaces whose positive cones generate the underlying real ordered structure; in the complex case, the whole space is then obtained by complexification.
Peralta~\cite{PAM} proposed a variant of Tingley's problem.
An extended form of Peralta's problem states as follows:
\begin{prob}\label{Tingley}
    Let $T:S(E)^+\to S(F)^+$ be a surjective isometry between the positive unit spheres of two ordered Banach spaces $E$ and $F$ with generating cones; namely
    $$\|T(x)-T(y)\| = \|x-y\|\qquad (x,y\in S(E)^+).$$
    Is there a surjective linear isometry $\Phi: E\to F$ extending $T$?
\end{prob}

Later, Leung, Ng and Wong~\cite{LNW21} solved this problem affirmatively for $L^p$--spaces with $1\le p\le\infty$.
In particular, this work is being applied directly to complex $L^p$--spaces and its extension is complex-linear.
In 2026, Enami \textit{et al.}~\cite{C1LIP} demonstrated a representation method for the cases of $C^1[0,1]$ and $Lip[0,1]$.
More precisely, they consider isometric isomorphisms from $C^1[0,1]$ onto $\mathbb{C}\oplus C([0,1])$ and from $Lip[0,1]$ onto $\mathbb{C}\oplus L^\infty([0,1])$, respectively, and characterize geometric properties of the positive unit ball in terms of their diameters.
They apply Mankiewicz’s theorem to surjective isometries between convex bodies $B(C[0,1])^+$ and $B(L^\infty[0,1])^+$, respectively, to obtain affirmative answers to Problem~\ref{Tingley}. 
However, $B(L^{1}([0,1]))^+$ has a different geometric structure.
The argument in~\cite[Theorem~1.1]{C1LIP} relies on a positive unit ball of diameter $1$ with nonempty interior.
In contrast, $\operatorname{diam} B(L^{1}([0,1]))^+=2$,
and $B(L^{1}([0,1]))^+$ has empty interior in $L^1([0,1])$.
Thus, even though $AC([0,1])$ admits the identification $AC([0,1])\simeq\mathbb{C}\oplus L^{1}([0,1])$, its associated $L^1$--geometry places it outside the scope of the convex-body approach developed in~\cite{C1LIP}.

In this paper, we prove that Problem~\ref{Tingley} has a positive answer for the spaces $AC^p(\Omega)$ of complex--valued absolutely continuous functions on a closed unit interval $\Omega$, equipped with the $p$--norm for every $1\leq p\leq \infty$.
Although we use the same representation idea as in~\cite{C1LIP} for cases $1<p\le\infty$, the subsequent argument is different.
On the other hand, for $p=1$, we represent $AC^1(\Omega)$ as an $L^1(X,\mu)$ over the disjoint union $X=\{\ast\}\sqcup\Omega$, equipped with a suitable $\sigma$-algebra and a
combined measure $\mu$.

The paper is organized as follows.
In Section 2, we introduce the notations and definitions used throughout the paper.
Several preliminary propositions and corollaries are established in Section 2.
In Section 3, we state the main theorem as well as necessary lemmas for characterizations of the corresponding geometry, and provide the proof of the main theorem at the end of the section.
In the last section, the main theorem is utilized for an application
of phase--isometries.
\section{Notations and preliminaries}\label{sec2}
We present some notations used in this paper.
Let $\Omega=[x_0,x_0+1]$ be equipped with the Lebesgue measure $m$, where $x_0\in\mathbb{R}$ is the left endpoint of the interval $\Omega$.
Let $AC_{\mathbb{C}}(\Omega)$ ($AC_{\mathbb{R}}(\Omega)$, resp.) be the vector space of complex--valued (real--valued, resp.) absolutely continuous functions on $\Omega$. 
We define the positive cone of $AC_{\mathbb{C}}(\Omega)$ by
$$AC_\mathbb{C}(\Omega)^+:=\{f\in AC_\mathbb{R}(\Omega): f(x_0)\ge0,\ f'\ge0\ \text{a.e.}\} = AC_\mathbb{R}(\Omega)^+,$$
and its induced order $\le_{\A}$.
We consider the following $p$--norm
\begin{equation}\label{p_norm}
    \|f\|_{\A,p}:=
    \begin{cases}
        (|f(x_0)|^p + \|f'\|^p_1)^{1/p}\quad (1\le p< \infty) \\
        \max\{|f(x_0)|,\ \|f'\|_1\}\quad (p=\infty)
    \end{cases}
    \qquad (f\in AC_\mathbb{C}(\Omega)).
\end{equation}
Then $(AC_\mathbb{C}(\Omega),\ \|\cdot\|_{\A,p},\ \le_{\A})$ is an ordered Banach space.
We simply denote $(AC_\mathbb{C}(\Omega),\ \|\cdot\|_{\A,p},\ \le_{\A})$ by $AC^p(\Omega)$.
The positive part of the unit sphere of $AC^p(\Omega)$ is defined as 
$$S(AC^p(\Omega))^+:= S(AC^p(\Omega))\cap AC_\mathbb{C}(\Omega)^+=\{f\in AC_\mathbb{C}(\Omega)^+ : \|f\|_{\A,p} = 1\},$$
where $S(AC^p(\Omega))=\{f\in AC^p(\Omega): \|f\|_{\A,p} = 1\}$ is the unit sphere of $AC^p(\Omega)$.
On the other hand, we use an almost everywhere derivative operator $\mathfrak{D}:AC^p(\Omega)\to L^1(\Omega)$ defined by $\mathfrak{D}(f):=f'$ a.e..
If $\mathfrak{D}|_{S(AC^p(\Omega))^+}^{-1}$ exists, we denote $\mathfrak{I}:=\mathfrak{D}|_{S(AC^p(\Omega))^+}^{-1}$.
The positive unit ball of $L^1(\Omega)$ is written as 
$B(L^1(\Omega))^+=\{u\in L^1_\mathbb{C}(\Omega)^+:\|u\|_1\le 1\}$, where $L^1_\mathbb{C}(\Omega)^+ = L^1_\mathbb{R}(\Omega)^+$ is the positive cone of the vector space $L^1_\mathbb{R}(\Omega)$ with usual order.
For each $1\le p\le\infty$, a positive cone of $\mathbb{C}\oplus L_\mathbb{C}^1(\Omega)$ is defined as 
$\mathbb{R}^+\oplus L_\mathbb{R}^1(\Omega)^+:=\{(a,u)\in \mathbb{R}\oplus L_\mathbb{R}^1(\Omega) :a\ge0, u\ge 0\ \text{a.e.}\}$, and its induced order $\le_{\oplus}$.
Define 
\begin{equation}
    \|(a,u)\|_{\oplus,p}:=
    \begin{cases}
        (|a|^p + \|u\|^p_1)^{1/p}\quad (1\le p< \infty) \\
        \max\{|a|,\ \|u\|_1\}\quad (p=\infty)
    \end{cases}\qquad((a,u)\in\mathbb{C}\oplus L^1_\mathbb{C}(\Omega)).
\end{equation}
We write $\mathbb{C}\oplus_p L^1(\Omega)$ for the ordered Banach space $(\mathbb{C}\oplus L_\mathbb{C}^1(\Omega),\ \|\cdot\|_{\oplus, p},\ \le_{\oplus})$.
The positive unit sphere of $\mathbb{C}\oplus_p L^1(\Omega)$ is written as 
$\Sigma^+=\{(a,u)\in\mathbb{R}^+\oplus L_\mathbb{R}^1(\Omega)^+: \|(a,u)\|_{\oplus,p} =1\}$.

We now establish several preliminary results.
\begin{prop}\label{prop:2.1}
    Suppose $1\le p\le \infty$.
    Then $AC^p(\Omega)$ is complex--linearly isometrically order isomorphic to $\mathbb{C}\oplus_p L^1(\Omega)$.
\end{prop}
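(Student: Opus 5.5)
The plan is to write down the natural map and check its properties one at a time. Define
$$\Psi: AC^p(\Omega)\to \mathbb{C}\oplus_p L^1(\Omega),\qquad \Psi(f):=(f(x_0),\ f') = (f(x_0),\ \mathfrak{D}(f)).$$
Complex linearity is immediate, since evaluation at $x_0$ and the almost everywhere derivative are both complex-linear. The isometry property follows directly from the definitions: for $1\le p<\infty$,
$$\|\Psi(f)\|_{\oplus,p}=(|f(x_0)|^p+\|f'\|_1^p)^{1/p}=\|f\|_{\A,p}.$$
The case $p=\infty$ is the same computation with the maximum in place of the $p$-sum. In particular $\Psi$ is injective.

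For surjectivity I would invoke the fundamental theorem of calculus for Lebesgue integrals. Given $(a,u)\in\mathbb{C}\oplus L^1_{\mathbb{C}}(\Omega)$, set
$$f(x):=a+\int_{x_0}^x u\,dm \qquad (x\in\Omega).$$
Then $f$ is absolutely continuous with $f(x_0)=a$ and $f'=u$ almost everywhere, by Lebesgue's differentiation theorem. Hence $\Psi(f)=(a,u)$, and $\Psi^{-1}$ is given explicitly by this formula. The same formula also shows directly that $AC^p(\Omega)$ is complete, as a norm-isometric copy of the Banach space $\mathbb{C}\oplus_p L^1(\Omega)$.

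For the order part, I need $\Psi(AC_{\mathbb{C}}(\Omega)^+)=\mathbb{R}^+\oplus L^1_{\mathbb{R}}(\Omega)^+$.
\begin{itemize}
\item If $f\ge_{\A}0$, then $f$ is real-valued with $f(x_0)\ge0$ and $f'\ge0$ a.e., so $\Psi(f)$ lies in the positive cone.
\item Conversely, if $a\ge0$ and $u\ge0$ a.e. (so $u$ is real), then $f=a+\int_{x_0}^x u\,dm$ is real-valued with $f(x_0)\ge 0$ and $f'=u\ge0$ a.e., so $f\in AC_{\mathbb{C}}(\Omega)^+$.
\end{itemize}
Since $\Psi$ is linear and maps the cone onto the cone, both $\Psi$ and $\Psi^{-1}$ are order-preserving.

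The argument is essentially routine. The only point needing care is the surjectivity and order step, namely the identification $f'=u$ a.e. for $f(x)=a+\int_{x_0}^x u$. This is exactly the Lebesgue differentiation theorem together with the fact that every absolutely continuous function is the integral of its derivative.
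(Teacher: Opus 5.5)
Your proposal is correct and follows the paper's proof essentially step for step. Both use the map $f\mapsto (f(x_0),f')$, get bijectivity from the fundamental theorem of calculus for absolutely continuous functions, compute the norm directly, and check that the map sends the positive cone onto $\mathbb{R}^+\oplus L^1_{\mathbb{R}}(\Omega)^+$ to conclude that both directions preserve order.
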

\begin{proof}
For $1\le p\le\infty$, define a map
$$J:AC^p(\Omega)\to \mathbb{C}\oplus_p L^1(\Omega),\qquad J(f) = (f(x_0),f').$$
Complex--linearity is immediate, and bijectivity follows from the fundamental theorem of calculus for absolutely continuous functions.
Since $J$ is complex--linear and bijective, isometry follows as norm-preserving follows.
That is, for each $f\in AC^p(\Omega)$, we have $\|J(f)\|_{\oplus,p}:=(|f(x_0)|^p+\|f'\|^p_1)^{1/p} = \|f\|_{\A,p}$ when $1\le p<\infty$, and $\|J(f)\|_{\oplus,\infty}=\max\{|f(x_0)|,\|f'\|_1\}=\|f\|_{\A,\infty}.
$ when $p=\infty$.
It remains to prove that $J$ preserves orders from both directions.
Because of the linearity and bijectivity, it is sufficient to show that $f\in AC_\mathbb{R}(\Omega)^+$ if and only if $J(f)\in \mathbb{R}^+\oplus L_\mathbb{R}^1(\Omega)^+$. 
Let $f\in AC_\mathbb{R}(\Omega)^+$, then
\begin{align*}
f\ge_{\A} 0\quad &\Longleftrightarrow\quad f(x_0)\ge0,\ f'\ge 0\text{ a.e.}\quad
\Longleftrightarrow\quad (f(x_0),f')\in \mathbb{R}^+\oplus L^1(\Omega)^+\quad
\Longleftrightarrow\quad J(f)\ge_{\oplus} 0
\end{align*}
That is, both $J$ and $J^{-1}$ preserve orders, and hence $J$ is order-preserving.
\end{proof}
For $1\le p<\infty$, the preceding identification allows us to describe the derivative map on
$S(AC^p(\Omega))_+$ in terms of the second--coordinate projection.

\begin{cor}\label{cor:2.1}
    Suppose $1\le p<\infty$.
    Then the restriction $\mathfrak{D}|_{S(AC^p(\Omega))^+}:S(AC^p(\Omega))^+\to B(L^1(\Omega))^+$ is bijective, and its inverse is given by
    $$\mathfrak{I}(u)(x)=(1-\|u\|_1^p)^{1/p} + \int_{x_0}^x u(t) dt,\quad x\in\Omega,\quad u\in B(L^1(\Omega))^+.$$
\end{cor}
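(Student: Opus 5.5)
The plan is to transport the problem through the isometric order isomorphism $J$ of Proposition~\ref{prop:2.1}. Then $\mathfrak{D}|_{S(AC^p(\Omega))^+}$ becomes the second--coordinate projection $\pi_2(a,u)=u$ restricted to $\Sigma^+$. Thus it suffices to show two things: that $\pi_2|_{\Sigma^+}$ maps $\Sigma^+$ bijectively onto $B(L^1(\Omega))^+$, and that its inverse is $u\mapsto\bigl((1-\|u\|_1^p)^{1/p},u\bigr)$. Composing with $J^{-1}$, which sends $(a,u)$ to $x\mapsto a+\int_{x_0}^x u(t)\,dt$ by the fundamental theorem of calculus for absolutely continuous functions, then yields the stated formula for $\mathfrak{I}$.

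The first step is to check that the map lands in $B(L^1(\Omega))^+$. Take $f\in S(AC^p(\Omega))^+$. Then $f'\ge0$ a.e., and from $|f(x_0)|^p+\|f'\|_1^p=1$ we get $\|f'\|_1\le 1$. Hence $\mathfrak{D}(f)\in B(L^1(\Omega))^+$.

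The second step is injectivity, and this is where $p<\infty$ is used. Suppose $f,g\in S(AC^p(\Omega))^+$ satisfy $f'=g'$ a.e. Since $f(x_0),g(x_0)\ge0$ and
$$f(x_0)^p=1-\|f'\|_1^p=1-\|g'\|_1^p=g(x_0)^p,$$
we get $f(x_0)=g(x_0)$, because $t\mapsto t^p$ is injective on $[0,\infty)$. Therefore $f=g$, since an absolutely continuous function is determined by its value at $x_0$ and its a.e. derivative.

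The third step is surjectivity together with the explicit inverse. Given $u\in B(L^1(\Omega))^+$, define $f:=\mathfrak{I}(u)$ by the displayed formula. The constant $(1-\|u\|_1^p)^{1/p}$ is a well-defined nonnegative real number because $\|u\|_1\le1$. The function $f$ is absolutely continuous, real-valued, and satisfies $f(x_0)=(1-\|u\|_1^p)^{1/p}\ge0$ and $f'=u\ge0$ a.e. It follows that $f\ge_{\A}0$ and
$$\|f\|_{\A,p}^p=(1-\|u\|_1^p)+\|u\|_1^p=1,$$
so $f\in S(AC^p(\Omega))^+$ and $\mathfrak{D}(f)=u$. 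Combined with injectivity, this identifies $\mathfrak{I}$ as the inverse.

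There is no real obstacle in this argument. The only delicate point is noting why $p<\infty$ is needed: for $p=\infty$ the norm-one condition $\max\{f(x_0),\|f'\|_1\}=1$ does not determine $f(x_0)$ from $f'$ when $\|f'\|_1=1$, so injectivity fails there.
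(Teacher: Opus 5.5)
Your proposal is correct and follows essentially the same route as the paper: pass through $J$ to the projection $\pi_2$ on $\Sigma^+$, use $\|u\|_1\le 1$ and the fact that $a=(1-\|u\|_1^p)^{1/p}$ is the unique nonnegative solution of $a^p+\|u\|_1^p=1$, and then pull the inverse back through $J^{-1}$. Your direct check of injectivity and surjectivity in $AC^p(\Omega)$ is the same argument written without the intermediate identification.
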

\begin{proof}
By Proposition~\ref{prop:2.1}, we have a complex--linear isometric order isomorphism $J:AC^p(\Omega)\to \mathbb{C}\oplus_p L^1(\Omega)$ satisfying $J(S(AC^p(\Omega))^+)=\Sigma^+$.
Under this identification, the derivative map $\mathfrak{D}$ corresponds to the
second--coordinate projection
$$
\pi_2:\Sigma^+\to B(L^1(\Omega))^+,
\qquad
\pi_2(a,u)=u.
$$

We show that $\pi_2$ is bijective.
If $(a,u)\in\Sigma^+$, then $u\ge 0$ a.e. and $\|u\|_1^p\le a^p+\|u\|_1^p=1$.
Thus $u\in B(L^1(\Omega))^+$.
Conversely, if $u\in B(L^1(\Omega))^+$, then $a=\left(1-\|u\|_1^p\right)^{1/p}$ is the unique nonnegative scalar satisfying $a^p+\|u\|_1^p=1$.
Hence $(a,u)\in\Sigma^+$, and $\pi_2(a,u)=u$.
Therefore $\pi_2:\Sigma^+\to B(L^1(\Omega))^+$ is bijective, with inverse
$$u\mapsto\left(\left(1-\|u\|_1^p\right)^{1/p},u\right).$$

Since $\mathfrak{D}|_{S(AC^p(\Omega))^+}=\pi_2\circ J|_{S(AC^p(\Omega))^+}$, it follows that $\mathfrak{D}|_{S(AC^p(\Omega))^+}$ is bijective as $J$ and $\pi_2$ are.
Then $\mathfrak{D}|_{S(AC^p(\Omega))^+}^{-1} = J|_{S(AC^p(\Omega))^+}^{-1}\circ\pi_2^{-1}$.
Namely, $\mathfrak{I}(u)=J^{-1}\left(\left(1-\|u\|_1^p\right)^{1/p},u\right)$ for each $u\in B(L^1(\Omega))^+$.
By the definition of $J^{-1}$, this gives
$$
\mathfrak{I}(u)(x)=\left(1-\|u\|_1^p\right)^{1/p}+\int_{x_0}^x u(t)\,dt,
\qquad x\in\Omega.
$$
This completes the proof.
\end{proof}

Although Corollary~\ref{cor:2.1} applies when $p=1$, the later argument for
$1<p<\infty$ will not be used in this case.
Instead, for $p=1$, the direct sum $\mathbb C\oplus_1 L^1(\Omega)$ admits an especially simple realization as $L^1(X,\mathfrak{A},\mu)$.

\begin{prop}\label{prop:2.2}
    Let $\ast$ be an isolated point disjoint from $\Omega$.
    Let $X=\{\ast\}\sqcup\Omega$, a $\sigma$-algebra $\mathfrak{A}:=\{A\subseteq X: A\cap\Omega\in\mathcal{L}(\Omega)\}$, and a combined measure $\mu(E):=m(E\cap\Omega)+\delta_{\ast}(E)$ for each $E\in\mathfrak{A}$.
    Then $\mathbb{C}\oplus_1 L^1(\Omega)$ is complex--linearly isometrically order isomorphic to $L^1(X,\mathfrak{A},\mu)$.
\end{prop}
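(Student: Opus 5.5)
We define the map explicitly and check each required property in turn. Let
$$\Psi:\mathbb{C}\oplus_1 L^1(\Omega)\to L^1(X,\mathfrak{A},\mu),\qquad \Psi(a,u)=F_{a,u},\quad F_{a,u}(\ast)=a,\ \ F_{a,u}|_{\Omega}=u.$$

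We first check that $\mathfrak{A}$ is a $\sigma$-algebra and $\mu$ is a measure.

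\begin{itemize}
\item The set $\mathfrak{A}$ contains $X$. It is closed under complements, since $(X\setminus A)\cap\Omega=\Omega\setminus(A\cap\Omega)$. It is closed under countable unions, since intersection with $\Omega$ commutes with unions.
\item Both $E\mapsto m(E\cap\Omega)$ and $\delta_\ast$ are countably additive on $\mathfrak{A}$, so their sum $\mu$ is a measure.
\end{itemize}

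Next we check that $\Psi$ is well defined on equivalence classes and bijective.

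\begin{itemize}
\item A function $F:X\to\mathbb{C}$ is $\mathfrak{A}$-measurable if and only if $F|_\Omega$ is Lebesgue measurable. The value $F(\ast)$ imposes no restriction, because every subset of $X$ containing $\ast$ and having Lebesgue-measurable trace on $\Omega$ lies in $\mathfrak{A}$.
\item Because $\mu(\{\ast\})=1>0$, two functions agree $\mu$-a.e. if and only if they agree at $\ast$ and agree $m$-a.e. on $\Omega$. So $\Psi$ respects a.e. classes.
\item For the inverse, we send the class of $F$ to $(F(\ast),F|_\Omega)$. This is well defined for the same reason.
\end{itemize}

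Complex linearity of $\Psi$ is immediate.

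For the isometry, we split the integral over $X=\{\ast\}\sqcup\Omega$:
$$\int_X|F_{a,u}|\,d\mu=|a|\,\mu(\{\ast\})+\int_\Omega|u|\,dm=|a|+\|u\|_1=\|(a,u)\|_{\oplus,1}.$$
Justifying this decomposition is the only step needing care. We write $|F|=|F|\chi_{\{\ast\}}+|F|\chi_\Omega$ and use linearity of the integral. We also use that integrating against $\mu$ restricted to $\Omega$ coincides with integrating against $m$; this is verified on simple functions and extended by monotone convergence. In particular, $F_{a,u}$ is integrable exactly when $u\in L^1(\Omega)$, so $\Psi$ maps onto $L^1(X,\mathfrak{A},\mu)$.

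For the order, we equip $L^1(X,\mathfrak{A},\mu)$ with its usual cone of real-valued functions with $F\ge0$ $\mu$-a.e. By the characterization of $\mu$-null sets above, $F\ge0$ $\mu$-a.e. if and only if $F(\ast)\ge0$ and $F|_\Omega\ge0$ $m$-a.e. This says precisely that $(a,u)\in\mathbb{R}^+\oplus L^1_\mathbb{R}(\Omega)^+$. Hence $\Psi$ and $\Psi^{-1}$ both preserve the order, as in Proposition~\ref{prop:2.1}.

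The main subtlety, though a mild one, is the role of the atom $\ast$. Since $\mu(\{\ast\})=1$, the point evaluation at $\ast$ is well defined on $L^1$ classes and contributes exactly $|a|$ to the norm.
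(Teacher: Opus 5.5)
Your proposal is correct and follows essentially the same route as the paper: the map $(a,u)\mapsto a\chi_{\{\ast\}}+u\chi_\Omega$, with well-definedness from the description of $\mu$-null sets, the isometry from splitting the integral over $\{\ast\}\sqcup\Omega$, and two-sided order preservation from the same null-set description. You additionally check that $\mathfrak{A}$ is a $\sigma$-algebra and $\mu$ is a measure (the paper only cites an exercise for this), and you note explicitly that evaluation at the atom $\ast$ is well defined on classes, which the paper's surjectivity argument uses only implicitly.
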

\begin{proof}
Recall that $\mu$ is a measure on $X$~\cite[Exercise 4, Chapter 17.1]{hlr4}, and hence we can define $L^1(X,\mathfrak{A},\mu)$.
Define
$$U:\mathbb{C}\oplus_1 L^1(\Omega)\to L^1(X,\mathfrak{A},\mu),\qquad U(a,u) = a\chi_{\{\ast\}}+u\chi_\Omega.$$
Suppose $[u]=[v]\in L^1(\Omega)$, then equivalently $m\{t\in\Omega:u(t) \neq v(t)\}=0$.
Since $\ast\not\in\{t\in\Omega:u(t)\neq v(t)\}$, we have
$a\chi_{\{\ast\}} + u\chi_\Omega=a\chi_{\{\ast\}}+ v\chi_\Omega$ $\mu$-a.e.,
and thus $U(a,[u]) = U(a, [v])$.
Hence $U$ is well-defined.
complex--linearity holds immediately.
Since $U$ is complex--linear, to show isometry, it is sufficient to show $U$ is norm-preserving.
Let $(a,u)\in\mathbb{C}\oplus_1 L^1(\Omega)$, then
\begin{equation*}
\|U(a,u)\|_{L^1(X)} = \int_{X}|a\chi_{\{\ast\}} + u\chi_{\Omega}| d\mu=\int_{\{\ast\}}|a|d\delta_{\ast}+\int_{\Omega}|u| dm=|a|+\|u\|_{1}=\|(a,u)\|_{\oplus,1},
\end{equation*}
thus $U$ is an isometry.
Now choose a representative $\widetilde{w}$ of $w\in L^1(X,\mathfrak{A},\mu)$, then $\widetilde{w}=a\chi_{\{\ast\}}+u\chi_\Omega$ for some $a\in\mathbb{C}$ and $u\in L^1(\Omega)$ because of $X=\{\ast\}\sqcup\Omega$.
Then $w= U(a,u)$, and hence $U$ is surjective.

Finally, we demonstrate $U$ preserves the positive cones in both directions.
Since $U$ is linear, it is sufficient to show that $(a,u)\ge_{\oplus}0$ if and only if $U(a,u)\ge 0$ $\mu$-a.e..
Let $(a,u)\in\mathbb{R}^+\oplus L_\mathbb{R}^1(\Omega)^+$, then
\begin{align*}
(a,u)\ge_{\oplus}0\quad&\Longleftrightarrow\quad a\ge0\quad\text{and}\quad u\ge 0\ m\text{--a.e.}\\
&\Longleftrightarrow\quad a\chi_{\{\ast\}}+u\chi_\Omega\ge 0\ \mu\text{--a.e.}\\
&\Longleftrightarrow\quad U(a,u)\ge 0\ \mu\text{--a.e.}
\end{align*}
That is, both $U$ and $U^{-1}$ preserve orders, and thus $U$ is order-preserving.
\end{proof}

\begin{cor}\label{cor:2.2}
    For $j=1,2.$
    Let $\Phi:L^1(X_1,\mathfrak{A}_1,\mu_1)\to L^1(X_2,\mathfrak{A}_2,\mu_2)$ be a complex--linear isometric order isomorphism, where the measure spaces $(X_j,\mathfrak{A}_j, \mu_j)$ are defined in Proposition~\ref{prop:2.2}.
    Then $\Phi(\chi_{\{\ast_1\}}) = \chi_{\{\ast_2\}}$.
\end{cor}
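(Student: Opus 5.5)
The plan is to identify $\chi_{\{\ast_j\}}$ by an order-theoretic and metric property that any complex--linear isometric order isomorphism must preserve: it spans the unique atom of the positive cone of $L^1(X_j,\mathfrak{A}_j,\mu_j)$. Concretely, call a nonzero positive element $w$ an \emph{atom} if $0\le v\le w$ implies $v=\lambda w$ for some $\lambda\ge 0$. We first check that $\Phi$ and $\Phi^{-1}$ preserve this notion. Since both are linear and positive, $0\le v\le w$ holds if and only if $0\le \Phi v\le \Phi w$. Linearity also preserves the condition $v=\lambda w$. Hence $\Phi$ maps the set of positive atoms of $L^1(X_1)$ bijectively onto the set of positive atoms of $L^1(X_2)$.

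Next we classify the positive atoms of $L^1(X_j,\mathfrak{A}_j,\mu_j)$. Write a positive $w$ as $a\chi_{\{\ast\}}+u\chi_\Omega$ with $a\ge0$ and $u\ge0$, as in the proof of Proposition~\ref{prop:2.2}.
\begin{itemize}
\item If $u\neq 0$, the Lebesgue measure is nonatomic. So we can pick a measurable $E\subseteq\Omega$ with $0<\int_E u\,dm<\int_\Omega u\,dm$, for instance using continuity of $t\mapsto\int_{[x_0,t]}u\,dm$. Then $v=u\chi_E$ satisfies $0\le v\le w$ but is not a scalar multiple of $w$: either it vanishes at $\ast$ while $w$ does not, or $u\chi_E\neq\lambda u$ for every $\lambda$. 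So $w$ is not an atom.
\item If $u=0$ and $a>0$, then any $0\le v\le a\chi_{\{\ast\}}$ is supported on $\{\ast\}$ up to $\mu$-null sets. Hence $v=b\chi_{\{\ast\}}$ with $0\le b\le a$, and $w$ is an atom.
\end{itemize}
Thus the positive atoms are exactly $\{a\chi_{\{\ast_j\}}: a>0\}$.

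It follows that $\Phi(\chi_{\{\ast_1\}})=a\chi_{\{\ast_2\}}$ for some $a>0$. Since $\Phi$ is an isometry, $a=\|\Phi(\chi_{\{\ast_1\}})\|_1=\|\chi_{\{\ast_1\}}\|_1=\mu_1(\{\ast_1\})=1$, which gives $\Phi(\chi_{\{\ast_1\}})=\chi_{\{\ast_2\}}$.

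The only step requiring care is the nonatomicity argument in the first case above. It needs a set $E$ splitting the mass of $u$, supplied by the intermediate value theorem applied to the continuous function $t\mapsto\int_{x_0}^t u\,dm$. The rest is routine bookkeeping with linearity and positivity.
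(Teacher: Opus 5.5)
Your proposal is correct and follows essentially the same route as the paper: $\Phi(\chi_{\{\ast_1\}})$ must be an order atom (every positive element below it is a scalar multiple of it), the only such elements of $L^1(\mu_2)$ are positive multiples of $\chi_{\{\ast_2\}}$, and the isometry forces the constant to be $1$. The differences are minor. You make explicit how the atom property transfers through $\Phi$ and $\Phi^{-1}$, which the paper uses silently in the step $0\le h\chi_E\le h\Rightarrow h\chi_E=\lambda h$. You also classify order atoms directly by splitting the mass of $u$ with the intermediate value theorem, instead of going through measure-theoretic atoms of $\mu_2$.
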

\begin{proof}
We simply write $L^1(X_j,\mathfrak{A}_j, \mu_j)$ as $L^1(\mu_j)$.
Consider a nonnegative representative $h$ of $\Phi(\chi_{\{\ast_1\}})\in L^1(\mu_2)$.
Put $S:=\{t\in X_2: h(t)>0\}$, the support of $h$.
Let $E\subseteq S$ be measurable, then we have $0\le h\chi_E\le h$ $\mu_2$--a.e., and thus
\begin{equation}\label{eq:2.1}
h\chi_E = \lambda h\quad \mu_2\text{--a.e.},
\end{equation}
for some $\lambda\in [0,1]$.
Since $\chi_E$ takes values either $1$ or $0$, it follows that $\lambda \in \{0,1\}.$
If $\lambda = 0$, then equality~(\ref{eq:2.1}) reveals $\chi_E = 0$ $\mu_2\text{--a.e.}$ on $S$.
Equivalently, $\mu_2(E) = 0$.
If $\lambda=1$, then equality~(\ref{eq:2.1}) becomes $h\chi_E = h$ $\mu_2\text{--a.e.}$
Then $\mu_2(E) = \mu_2(S)$, and thus $S$ is the atom in $(X_2,\mathfrak{A}_2, \mu_2)$.
the unique atom of the measure space $(X_2,\mathfrak{A}_2, \mu_2)$ is $\{\ast_2\}$, we have $\Phi(\chi_{\{\ast_1\}}) = c\chi_{\{\ast_2\}}$ for some $c>0$.
By the isometry of $\Phi$, we obtain $c=1$, and thus $\{\chi_{\{\ast_j\}}\}$ sends to each other by $\Phi$.
We conclude the required statement.
\end{proof}

\section{Main results}
Throughout this section, let $\Omega_1=[x_1,x_1+1]$ and $\Omega_2=[x_2,x_2+1]$ be closed intervals in $\mathbb{R}$, both equipped with Lebesgue measure $m$.
Let $\Delta:S(AC^p(\Omega_1))^+\to S(AC^p(\Omega_2))^+$ be a surjective isometry.
We state the main theorem
\begin{thm}\label{thm:main}
Suppose $1\le p\le \infty$.
Let $\Delta:S(AC^p(\Omega_1))^+\to S(AC^p(\Omega_2))^+$ be a surjective isometry.
Then there exists a unique complex--linear isometric order isomorphism $\widetilde{\Delta}:AC^p(\Omega_1)\to AC^p(\Omega_2)$ such that $\widetilde{\Delta}|_{S(AC^p(\Omega_1))^+} = \Delta$.
In particular, 
$$\widetilde{\Delta}(f)(x)=f(x_1)+\int_{x_2}^x \Lambda(f')(t) dt,\qquad f\in AC^p(\Omega_1),\quad x\in\Omega_2,$$
where $\Lambda:L^1(\Omega_1)\to L^1(\Omega_2)$ is the unique complex--linear isometric order isomorphism determined by $\Delta$.
\end{thm}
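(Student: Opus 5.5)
We treat $p=1$ and $1<p\le\infty$ separately. In both cases we first transport $\Delta$ to a surjective isometry between positive unit spheres of spaces of the form $\mathbb{C}\oplus_p L^1(\Omega_j)$. Then we extract an $L^1$-isometry $\Lambda$ from the theorem of Leung, Ng and Wong~\cite{LNW21}, and finally pull everything back through $J$.

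\emph{Uniqueness.} This is immediate. The real span of $AC_\mathbb{R}(\Omega_1)^+$ is $AC_\mathbb{R}(\Omega_1)$: write $f=(f(x_1)+\int (f')^+)-(\int (f')^-)$, shifting constants as needed. The complex span is therefore everything. Every nonzero positive element is a positive multiple of an element of $S(AC^p(\Omega_1))^+$, so a complex-linear extension of $\Delta$ is determined by $\Delta$.

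\emph{Case $p=1$.} Let $U_j$ be the map of Proposition~\ref{prop:2.2} and set $V_j:=U_j\circ J_j:AC^1(\Omega_j)\to L^1(X_j,\mathfrak{A}_j,\mu_j)$. Each $V_j$ is a complex-linear isometric order isomorphism, so it maps positive unit spheres onto positive unit spheres. Consequently $V_2\circ\Delta\circ V_1^{-1}$ is a surjective isometry between the positive unit spheres of $L^1(\mu_1)$ and $L^1(\mu_2)$. By~\cite{LNW21} it extends to a complex-linear surjective isometry $\Phi$. This $\Phi$ maps the positive sphere onto the positive sphere, so both $\Phi$ and $\Phi^{-1}$ are positive, and $\Phi$ is an isometric order isomorphism. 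Corollary~\ref{cor:2.2} gives $\Phi(\chi_{\{\ast_1\}})=\chi_{\{\ast_2\}}$.

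A positive isometry of $L^1$ preserves disjointness, since $\|f+g\|=\|f\|+\|g\|=\||f-g|\|$ for disjoint positive $f,g$. Hence $\Phi$ maps the band $L^1(\Omega_1)$, the disjoint complement of $\chi_{\{\ast_1\}}$, onto $L^1(\Omega_2)$. Let $\Lambda$ be this restriction. Then $\widetilde\Delta:=V_2^{-1}\Phi V_1$ sends $f$ to the function with data $(f(x_1),\Lambda f')$, which is exactly the stated formula.

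\emph{Case $1<p<\infty$.} Work on $\Sigma_j^+$ with $d((a,u),(b,v))^p=|a-b|^p+\|u-v\|_1^p$. The argument has three steps.

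First, characterize the face $F_j=\{(0,u):\|u\|_1=1\}$ metrically. By the triangle inequality and Minkowski's inequality in $\ell^p$,
$$d\le \|(a+b,\|u\|+\|v\|)\|_p\le 2,$$
and strictness of Minkowski for $p>1$ shows that equality forces $a=b$, $\|u\|=\|v\|$ and $|a-b|=a+b$. Thus $a=b=0$ and $u,v$ are disjoint. Since $m$ is nonatomic, every element of $F_j$ has a partner at distance $2$, while nothing else does. So $\Delta$ maps $F_1$ onto $F_2$.

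Second, $F_j\cong S(L^1(\Omega_j))^+$ isometrically. Applying~\cite{LNW21} again yields a complex-linear isometric order isomorphism $\Lambda$ with $\Delta(0,u)=(0,\Lambda u)$.

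Third, recover a general point $(a,u)$ from its distances to $F_1$. We have
$$\sup_{v}d((a,u),(0,v))^p=a^p+(1+\|u\|)^p,$$
approached by $v$ concentrated on small sets of small $u$-mass. Together with $a^p+\|u\|^p=1$, this is a strictly monotone function of $\|u\|$. Hence $\Delta(a,u)=(a,w)$ with $\|w\|=\|u\|$ and $\|w-\Lambda v\|=\|u-v\|$ for all $v$. The minimal possible value $1-\|u\|$ of $\|u-v\|$ is attained exactly when $u\le v$. So $\{v: u\le v\}$ is mapped by $\Lambda$ onto $\{v':w\le v'\}$. Since $u$ is the infimum of this set, and $\Lambda$ is a lattice isomorphism, $w=\Lambda u$. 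Pulling back through $J_j$ gives the formula.

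\emph{Case $p=\infty$ (main obstacle).} Here $\Sigma^+$ is a union of the two faces $\{a=1\}$ and $\{\|u\|=1\}$, and the distance is a max. This breaks the strict-convexity argument of the first step above. I would instead characterize the face $\{(a,u):\|u\|_1=1\}$ through pairs at distance $2$ in the $L^1$-coordinate: such pairs need $\|u-v\|_1=2$, i.e. disjoint $u,v$ of norm $1$. In parallel I would identify the point $(1,0)$ as the unique point whose distances to all points are at most $1$. I expect the delicate part to be showing that $\Delta$ does not mix the $a$-coordinate with the $L^1$-coordinate on the face $\{a=1\}$. I would first try to use the sup/inf distance arguments from the third step, now with max in place of $\ell^p$-sums.
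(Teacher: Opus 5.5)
\textbf{The case $p=\infty$ is a genuine gap.} The statement includes $p=\infty$, but there you give only a plan, and it stops before the real difficulty. Part of the plan can be completed from your own ideas. Your characterization of $(1,0)$ is correct and gives $\Gamma(1,0)=(1,0)$. On $\Sigma^+$ (where $a=1$ or $\|u\|_1=1$) one has $d\bigl((1,0),(a,u)\bigr)=\max\{1-a,\|u\|_1\}=\|u\|_1$, so $\|u\|_1$ is preserved. Hence $Q=\{\|u\|_1=1\}$ is invariant, and so is $P=\{a=1\}=\overline{\Sigma^+\setminus Q}$. On $P$ the metric is just the $L^1$-metric, so the analogue of your Steps 2--3 (with $P\cap Q$ playing the role of $F_j$) gives $\Gamma(1,u)=(1,\Phi u)$. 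So the face $\{a=1\}$ is not where the difficulty lies.

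The missing part is $Q$, where $a$ is not determined by $\|u\|_1$ and your Step 3 does not transfer. The distance from $(a,u)\in Q$ to $(1,z)\in P\cap Q$ is $\max\{1-a,\|u-z\|_1\}$, which hides $\|u-z\|_1$ whenever it is below $1-a$. Your order trick is also vacuous there, since $\{z\in S(L^1)^+:z\ge u\}=\{u\}$ when $\|u\|_1=1$. The paper handles $Q$ as follows:
\begin{itemize}
\item Write $\Gamma(a,u)=(b,v)$; comparing distances to the points $(1,z)$ gives $\max\{1-b,\|v-z\|_1\}=\max\{1-a,\|\Phi u-z\|_1\}$ for all $z\in S(L^1(\Omega_2))^+$.
\item Choosing $z=v$ and then $z=\Phi u$ yields $a=b$.
\item A separation result that survives the truncation, Lemma~\ref{lem:3.2}(b), then forces $v=\Phi u$: for $c<2$, take $z=\chi_E/m(E)$ with $E$ a very small set on which $v,\Phi u$ are bounded and $v-\Phi u$ has constant sign. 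Then both $L^1$-distances exceed $c$ and differ by $2\bigl|\int_E(v-\Phi u)\bigr|\neq 0$.
\end{itemize}

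\textbf{A false step for $1<p<\infty$.} The claim that every element of $F_j$ has a partner at distance $2$ is wrong. If $u$ has full support (e.g.\ $u\equiv 1$ on $\Omega$), no positive norm-one $v$ is disjoint from $u$, so the value $2$ is only approached. Your proposed characterization of $Q$ for $p=\infty$ has the same defect. Use the radius $\sup_g d(f,g)$ instead. It equals $2$ on $F_j$, by concentrating $v$. For it to be $<2$ off $F_j$, your pointwise equality analysis is not enough, since it only shows that $2$ is not attained. Bound $d^p$ by $|a-b|^p+(\|u\|_1+t)^p$ and note that this has maximum $<2^p$ over the compact arc $b^p+t^p=1$ when $\|u\|_1<1$. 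Alternatively, pass to closures of the set of points that have an attained partner.

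With this repair, your route for $1<p<\infty$ is correct and genuinely different from the paper's. The paper first locates the constant function $1$ as the unique minimizer of $\rho$ and deduces preservation of $\|f'\|_1$ from distances to it via a monotonicity argument. It then passes from $S(L^1)^+$ to $B(L^1)^+$ by the separation Lemma~\ref{lem:3.2}(a). You instead locate the face $\{a=0\}$ directly. You then replace Lemma~\ref{lem:3.2}(a) by the neat observation that $u=\inf\{v\in S(L^1)^+:v\ge u\}$, a set that is metrically determined.

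Your $p=1$ case is the paper's, and you supply the disjointness argument it omits. Your uniqueness argument via the generating cone is cleaner than the paper's appeal to the uniqueness part of~\cite{LNW21}.
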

At the first stage, we collect several metric facts needed for the cases $1<p\leq\infty$.

\begin{lem}\label{lem:3.1}
Suppose $1<p\le\infty$.
Then $\Delta(\mathds{1}_1) = \mathds{1}_2$. 
\end{lem}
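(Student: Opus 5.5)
Here $\mathds{1}_j$ is the constant function $1$ on $\Omega_j$, so $J(\mathds{1}_j)=(1,0)\in\Sigma^+$. The plan is to characterize $\mathds{1}_j$ purely metrically inside $S(AC^p(\Omega_j))^+$, so that any surjective isometry must fix it. By Proposition~\ref{prop:2.1} we may work in $\Sigma^+\subseteq\mathbb{C}\oplus_p L^1(\Omega)$. The key observation is that $L^1(\Omega)$ is atomless: for any positive $u$ with $\|u\|_1=s$ there is a positive $v$ with the same norm and disjoint support, so $\|u-v\|_1=2s$. By contrast, the first coordinate is one-dimensional, and two points of $\Sigma^+$ differ there only by $|a-b|\le 1$. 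The characterization I would aim for uses the distance to the farthest point,
$$r(\xi):=\sup_{\eta\in\Sigma^+}\|\xi-\eta\|_{\oplus,p},$$
which is preserved by any surjective isometry. I will show that $(1,0)$ is the unique minimizer of $r$.

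The first step is to compute $r(1,0)$. For $\eta=(b,v)\in\Sigma^+$ we have $\|(1,0)-(b,v)\|^p=(1-b)^p+\|v\|_1^p$ with $b^p+\|v\|_1^p=1$. Writing $s=\|v\|_1$, this is $(1-(1-s^p)^{1/p})^p+s^p$, and its supremum occurs at $s=1$, giving $r(1,0)=2^{1/p}$. For $p=\infty$ the analogous computation gives $\max\{1-b,\|v\|_1\}\le 1$, so $r(1,0)=1$.

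The second step is to show $r(\xi)>r(1,0)$ for every other $\xi=(a,u)$ with $u\ne0$. Put $s=\|u\|_1>0$. Choose $\eta=(0,v)$ with $\|v\|_1=1$ and $v$ supported in a set of small measure disjoint from most of the support of $u$. This makes $\|u-v\|_1$ arbitrarily close to $s+1$, so
$$r(\xi)^p\ge a^p+(1+s)^p>a^p+s^p+1=2=r(1,0)^p$$
by strict superadditivity of $t\mapsto t^p$ for $p>1$. For $p=\infty$ the same choice gives $r(\xi)\ge 1+s>1$. This is exactly where $p>1$ is needed: for $p=1$ the inequality collapses to equality, which is consistent with the lemma being stated only for $1<p\le\infty$.

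The step I expect to need the most care is the approximation argument. I need to know that $v$ can be placed with $\|u-v\|_1\to s+1$; this follows because the Lebesgue measure has no atoms, so we can take $v$ as a normalized indicator of a set $E$ with $m(E)\to0$ and $\int_E u\to0$ by absolute continuity of the integral.

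Granting this, $(1,0)$ is the unique point of $\Sigma^+$ minimizing $r$, and $r$ is invariant under surjective isometries. Transporting back through $J$, $\mathds{1}_j$ is the unique minimizer of the farthest-point distance in $S(AC^p(\Omega_j))^+$. Since $\Delta$ is a surjective isometry, it carries the unique minimizer to the unique minimizer, so $\Delta(\mathds{1}_1)=\mathds{1}_2$.
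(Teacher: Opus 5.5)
Your proposal is correct and follows essentially the same route as the paper: the same farthest-point radius function, the computation $\rho(\mathds{1})=2^{1/p}$ (resp.\ $1$ for $p=\infty$), and test points $(0,\chi_{E}/m(E))$ with $m(E)\to 0$, combined with $(1+s)^p>1+s^p$ for $p>1$, to show $(1,0)$ is the unique minimizer. Your explicit appeal to absolute continuity of the integral, giving $\int_E u\to 0$, is exactly what justifies the paper's limit $\|u-v_n\|_1\to\|u\|_1+1$.
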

\begin{proof}
For each $f\in S(AC^p(\Omega))^+$, inspired by~\cite{LNW25}, we consider a radius function defined as 
$$\rho(f):=\sup_{g\in S(AC^p(\Omega))^+}\|f-g\|_{\A,p},$$
By Proposition~\ref{prop:2.1}, we write $f$ in the form $(a,u)\in \Sigma^+$.
Firstly, we consider $\rho(\mathds{1})$.
For $g=(b,v)\in \Sigma^+$,
$$
\begin{aligned}
\|\mathbb{1}-g\|_{\A,p}
&=
\begin{cases}
\bigl((1-b)^p+\|v\|_1^p\bigr)^{1/p}
=
\bigl((1-b)^p+1-b^p\bigr)^{1/p}
\le 2^{1/p}
& (1<p<\infty) \\[0.8ex]
\max\{1-b,\|v\|_1\}\le 1
& (p=\infty)
\end{cases}.
\end{aligned}
$$
Hence $\rho(\mathds{1})=2^{1/p}$ for $1<p<\infty$, and $\rho(\mathds{1}) = 1$ for $p=\infty$.
Now let $f=(a,u)\in \Sigma^+\setminus\{(1,0)\}$.
Since $\Omega$ is non-atomic, we may choose measurable sets $E_n\subsetneq\Omega$ such that
$0<m(E_n)<1/n$.
Define $v_{n}:=\chi_{E_n}/m(E_n)\ge 0\text{ a.e.}$.
Then $\|v_n\|_1 = 1$, and hence $v_n\in S(L^1(\Omega))^+$.
We have 
\begin{align*}
\|u-v_n\|_1 &= \|u\|_1+\|v_n\|_1-2\int_{\Omega} (u\wedge v_n) \\
&= \|u\|_1+1-2\int_{E_n} (u\wedge\frac{1}{m(E_n)}) \\
&\ge \|u\|_1+1-2\int_{E_n} u.
\end{align*}
Thus, $\|u-v_n\|_1\to \|u\|_1+1$ as $n\to\infty$.
Let $g_{n}=(0,v_{n})\in \Sigma^+$.
In the case $1<p<\infty$, we have $\rho(f)^p\ge\limsup_{n}\|f-g_n\|^p_{\A,p}=a^p+(\|u\|_1+1)^{p}=1-\|u\|_1^{p}+(\|u\|_1+1)^{p}$.
Since $p>1$ and $\|u\|_1>0$, we have $(\|u\|_1+1)^{p} >1+\|u\|_1^{p}$, and thus $\rho(f)^{p}>2$.
Therefore $\rho(f)>2^{1/p}=\rho(\mathds{1})$.
In the case $p=\infty$, $\rho(f)\ge\limsup_{n}\|f-g_n\|_{\A,\infty} =\limsup_{n}\|u-v_n\|= \|u\|_1+1$.
Since $f\neq (1,0)$, and hence $\|u\|_1> 0$, we conclude $\rho(f)  >1=\rho(\mathds{1})$.

Consequently, $\mathds{1}$ is the unique minimizer of the radius function $\rho$ in $S(AC^p(\Omega))^+$ for each $1<p\le\infty$. 
Since $\Delta$ is a surjective isometry, it preserves
$\rho$.
Hence $\rho(\Delta(\mathds{1}_1))=\rho(\mathds{1}_{2})$.
By uniqueness, $\Delta(\mathds{1}_1)=\mathds{1}_2$.
\end{proof}

\begin{rmk}\label{rmk:rho}
    For $p=\infty$, we have $\rho(a,u) = \|u\|_1 +1$ for each $(a,u)\in \Sigma^+$.
\end{rmk}

Having identified the constant function $\mathds{1}$ as a distinguished point of $S(AC^p(\Omega))^+$ for $1<p\leq \infty$, we next recover the $L^1$-component from metric structure.
The following lemma records separation properties of $B(L^1(\Omega))^+$ and $S(L^1(\Omega))^+$.

\begin{lem}\label{lem:3.2}
    (a) Let $u,v\in B(L^1(\Omega))^+$ with $\|u\|_1 = \|v\|_1$.
    If $\|u-z\|_1 = \|v-z\|_1$ for all $z\in S(L^1(\Omega))^+$, then $u=v$.\newline
    (b) Let $u,v\in S(L^1(\Omega))^+$ and $c\in[0,2)$.
    If $\max\{c,\|u-z\|_1\} = \max\{c,\|v-z\|_1\}$ for all $z\in S(L^1(\Omega))^+$, then $u=v$.
\end{lem}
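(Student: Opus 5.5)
We argue by testing against normalized indicators $z=\chi_E/m(E)$, which lie in $S(L^1(\Omega))^+$ because $\Omega$ is non-atomic, and read off $u$ and $v$ from the resulting distances. The basic identity is the one already used in the proof of Lemma~\ref{lem:3.1}: for $w\in B(L^1(\Omega))^+$ and $z\in S(L^1(\Omega))^+$,
$$\|w-z\|_1=\|w\|_1+1-2\int_\Omega (w\wedge z)\,dm .$$

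For (a), suppose $\|u\|_1=\|v\|_1$. Then the hypothesis together with this identity gives $\int (u\wedge z)=\int (v\wedge z)$ for every $z\in S(L^1(\Omega))^+$. Fix a measurable $E$ with $m(E)>0$ and $\lambda\ge 1/m(E)$. Choose a measurable $F$ disjoint from $E$ with $m(F)>0$ small, and put
$$z=\lambda\chi_E+c\,\chi_F,\qquad c=\frac{1-\lambda m(E)}{m(F)}\ge 0 .$$
This forces $\lambda m(E)\le 1$, so we also need $m(E)\le 1/\lambda$. To avoid that constraint, we instead use the following device. For $z=\chi_E/m(E)$ the equality $\int (u\wedge z)=\int(v\wedge z)$ gives
$$\int_E \min\bigl(u,1/m(E)\bigr)=\int_E \min\bigl(v,1/m(E)\bigr)$$
for all $E$ with $m(E)>0$. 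Now take $E\subseteq\{u>v\}$ with $m(E)\le \delta$. Since $u\in L^1$, truncation at level $1/m(E)$ is harmless on sets where $u$ is bounded. Choose $E\subseteq\{v<u\le M\}$ with $m(E)\le 1/M$; then the equality yields $\int_E u=\int_E \min(v,1/m(E))=\int_E v$, which contradicts $u>v$ on $E$. The sets $\{v<u\le M\}$ exhaust $\{u>v\}$, and small subsets exist by non-atomicity. Hence $u\le v$ a.e. By symmetry $v\le u$ a.e., so $u=v$.

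For (b), $u,v\in S(L^1(\Omega))^+$, so $\|u\|_1=\|v\|_1=1$ and $\|u-z\|_1=2-2\int(u\wedge z)$. The max with $c<2$ only discards information when $\|u-z\|_1\le c$. We therefore restrict to test vectors $z$ that are far from both $u$ and $v$: we need $\int(u\wedge z)$ and $\int(v\wedge z)$ both less than $(2-c)/2$. With $z=\chi_E/m(E)$ and $E\subseteq\{v<u\le M\}$, $m(E)\le 1/M$ as above, we get $\int(u\wedge z)=\int_E u$. This is at most $\int_E u$, which tends to $0$ as $m(E)\to 0$ by absolute continuity of the integral, and the same holds for $\int_E v$. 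Taking $E$ small enough puts both distances above $c$, so the max equation becomes $\|u-z\|_1=\|v-z\|_1$. This gives $\int_E u=\int_E v$, the same contradiction as in (a). Hence $u=v$.

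The main obstacle is arranging the test sets so that the truncation $\min(\cdot,1/m(E))$ is inactive while $E$ remains a set of positive measure inside $\{u>v\}$; in (b) the same sets must also be small enough to push the distances above $c$. Intersecting with $\{u\le M\}$ and shrinking $E$ handles both requirements at once.
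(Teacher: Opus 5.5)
Your proof is correct and is essentially the paper's argument: you test against $z=\chi_E/m(E)$ for a small set $E$ of positive measure on which $u$ and $v$ are bounded by a constant at most $1/m(E)$ and $u-v$ has constant sign. The truncation $\min(\cdot,1/m(E))$ is then inactive, so $\|u-z\|_1-\|v-z\|_1=-2\int_E(u-v)\neq 0$, and in (b) you additionally take $E$ small enough that both distances exceed $c$. The differences are cosmetic: you use $\{v<u\le M\}$ in place of the paper's $\{u\le k,\ v\le k\}\cap\{u-v>\varepsilon\}$, which even dispenses with $\varepsilon$, and the abandoned $\lambda\chi_E+c\,\chi_F$ detour in (a) can simply be deleted.
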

\begin{proof}
We prove both assertions by contraposition.
(a) Suppose $u\ne v$.
Put $f=u-v$.
Then $f\ne 0$ on a set of positive measure.
Define
$$\Omega_k=\{t\in\Omega:u(t)\le k,\ v(t)\le k\}.$$
Then $\Omega_k\uparrow\Omega$ up to a null set.
Choose $k\in\mathbb N$ such that $m\{t\in\Omega_k:f(t)>0\}>0$, or else such that $m\{t\in\Omega_k:f(t)<0\}>0$.

Assume that $m\{t\in\Omega_k:f(t)>0\}>0$; the other case is identical. Since $f\in L^1(\Omega)$, there exists $\varepsilon>0$ such that $A_\varepsilon:=\{t\in\Omega_k:f(t)>\epsilon\}$ has positive measure. By $\sigma$-finiteness and non-atomicity of Lebesgue measure, we may choose a measurable set $E\subseteq A_\varepsilon$ such that $0<m(E)<\frac1k$.
Define $z=\frac{\chi_E}{m(E)}$.
Then $z\in S(L^1(\Omega))^+$. Moreover, since $m(E)<1/k$, we have $z(t)=\frac1{m(E)}>k\ge u(t),v(t)$ for $t\in E$.
Hence $\|u-z\|_1=\|u\|_1+1-2\int_E u$, and similarly $\|v-z\|_1=\|v\|_1+1-2\int_E v$.
Since $\|u\|_1=\|v\|_1$, we get
\begin{equation}\label{eq:3.1}
\|u-z\|_1-\|v-z\|_1
=-2\int_E(u-v)
=-2\int_E f.
\end{equation}
But $f>\varepsilon$ on $E$, so $\int_E f>0$.
Therefore $\|u-z\|_1\ne \|v-z\|_1$.
Thus, if $\|u-z\|_1 = \|v-z\|_1$ for each $z\in S(L^1(\Omega))^+$, then $u=v$.
(b) Suppose $u\ne v$, and put again $f=u-v$. As above, choose $k\in\mathbb N$, $\varepsilon>0$, and a measurable set $E\subseteq\{t\in\Omega_k:|f(t)|>\varepsilon\}$ such that $f$ has constant sign on $E$ and $0<m(E)<\frac{2-c}{2k}$.
Define $z=\frac{\chi_E}{m(E)}$.
Then $z\in S(L^1(\Omega))^+$. Also $m(E)<\frac{2-c}{2k}\le \frac1k$, so $z(t)>k\ge u(t),v(t)$ on $E$. Since $u,v\in S(L^1(\Omega))^+$, we have equality~(\ref{eq:3.1}).
Moreover, $\int_E u\le k m(E)<\frac{2-c}{2}$ implies $\|u-z\|_1=\|u\|_1+1-2\int_E u>2-2(\frac{2-c}{2})=c$.
Similarly, $\|v-z\|_1>c$.
Therefore
$$\max\{c,\|u-z\|_1\}=\|u-z\|_1\qquad\text{and}\qquad \max\{c,\|v-z\|_1\}=\|v-z\|_1.$$
Since $f$ has constant nonzero sign on $E$, we have $\int_E f\ne 0$.
Equality~(\ref{eq:3.1}) implies $\|u-z\|_1\ne \|v-z\|_1$, and hence $\max\{c,\|u-z\|_1\}\ne \max\{c,\|v-z\|_1\}$.
This proves the contrapositive, and hence proves (b).
\end{proof}

We now apply the preceding lemma to the map induced by $\Delta$ between $B(L^1(\Omega_1))^+$ and $B(L^1(\Omega_2))^+$.

\begin{lem}\label{lem:3.3}
Suppose $1<p< \infty$.
Define $T:=\mathfrak{D}_2|_{S(AC^p(\Omega_2))^+}\circ\Delta\circ\mathfrak{I}_1:B(L^1(\Omega_1))^+\to B(L^1(\Omega_2))^+$.
Then $T$ is a surjective isometry with respect to $L^1$--norm.
Moreover, there exists a unique complex--linear isometric order isomorphism $\Phi:L^1(\Omega_1)\to L^1(\Omega_2)$ such that $\Phi|_{B(L^1(\Omega_1))^+} = T$.
\end{lem}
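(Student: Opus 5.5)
The plan is to transfer everything to the direct-sum picture of Proposition~\ref{prop:2.1}, writing each element of $S(AC^p(\Omega_j))^+$ as $(a,u)\in\Sigma^+_j$ with $a=(1-\|u\|_1^p)^{1/p}$, as in Corollary~\ref{cor:2.1}.

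\textbf{Step 1: $\Delta$ preserves the scalar component.} For $f=(a,u)\in\Sigma^+$ we have
$$\|f-\mathds{1}\|_{\A,p}^p=(1-a)^p+\|u\|_1^p=(1-a)^p+1-a^p=:\varphi(a).$$
For $1<p<\infty$ we get $\varphi'(a)=-p\bigl((1-a)^{p-1}+a^{p-1}\bigr)<0$ on $(0,1)$, so $\varphi$ is strictly decreasing on $[0,1]$. Since $\Delta$ is an isometry fixing $\mathds{1}$ by Lemma~\ref{lem:3.1}, it preserves $\|f-\mathds{1}\|_{\A,p}$. Hence if $\Delta(a,u)=(a',u')$, then $a'=a$ and therefore $\|u'\|_1=\|u\|_1$. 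In particular $\|T(u)\|_1=\|u\|_1$ for every $u\in B(L^1(\Omega_1))^+$.

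\textbf{Step 2: $T$ is a surjective $L^1$-isometry.} Take $u,v\in B(L^1(\Omega_1))^+$ with associated scalars $a,b$, and let $f=\mathfrak{I}_1(u)$, $g=\mathfrak{I}_1(v)$. Then
$$|a-b|^p+\|Tu-Tv\|_1^p=\|\Delta f-\Delta g\|_{\A,p}^p=\|f-g\|_{\A,p}^p=|a-b|^p+\|u-v\|_1^p,$$
where the first equality uses Step 1 (the scalar parts of $\Delta f,\Delta g$ are still $a,b$). Hence $\|Tu-Tv\|_1=\|u-v\|_1$. Surjectivity of $T$ follows because $T$ is a composition of the bijections $\mathfrak{I}_1$ and $\mathfrak{D}_2|_{S(AC^p(\Omega_2))^+}$ (Corollary~\ref{cor:2.1}) with the surjection $\Delta$.

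\textbf{Step 3: extension via the unit sphere.} By Step 1, $T$ preserves $L^1$-norms, so it maps $S(L^1(\Omega_1))^+$ onto $S(L^1(\Omega_2))^+$; the preimage of a norm-one element has norm one. Thus $T|_{S(L^1(\Omega_1))^+}$ is a surjective isometry between positive unit spheres. By the theorem of Leung, Ng and Wong~\cite{LNW21}, it extends to a complex-linear isometric order isomorphism $\Phi:L^1(\Omega_1)\to L^1(\Omega_2)$.

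It remains to show $\Phi=T$ on the whole positive ball. Fix $u\in B(L^1(\Omega_1))^+$. Both $Tu$ and $\Phi u$ lie in $B(L^1(\Omega_2))^+$ and have norm $\|u\|_1$. For every $z\in S(L^1(\Omega_1))^+$,
$$\|Tu-Tz\|_1=\|u-z\|_1=\|\Phi u-\Phi z\|_1,\qquad Tz=\Phi z.$$
As $z$ runs over $S(L^1(\Omega_1))^+$, the element $Tz=\Phi z$ runs over all of $S(L^1(\Omega_2))^+$. Hence $\|Tu-w\|_1=\|\Phi u-w\|_1$ for every $w\in S(L^1(\Omega_2))^+$, and Lemma~\ref{lem:3.2}(a) gives $Tu=\Phi u$.

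\textbf{Step 4: uniqueness.} Any complex-linear extension is determined on $B(L^1(\Omega_1))^+$. Every element of $L^1_\mathbb{C}$ is a complex combination of four positive elements, and each of these can be scaled into the positive ball. So the extension is unique.

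The main obstacle is Step 1, the recovery of the norm $\|u\|_1$ from the metric. It rests on the strict monotonicity of $\varphi$, which fails at $p=1$; this is exactly why the case $p=1$ is excluded from this lemma. The other delicate point is correctly invoking~\cite{LNW21} on the positive sphere rather than the ball.
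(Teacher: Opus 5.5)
Your proof is correct and follows essentially the same route as the paper: recover $\|u\|_1$ from the distance to $\mathds{1}$ using Lemma~\ref{lem:3.1}, deduce that $T$ is an $L^1$-isometry, apply the Leung--Ng--Wong theorem on the positive sphere, and pass to the whole positive ball with Lemma~\ref{lem:3.2}(a). One correction to your closing remark: the monotonicity does not fail at $p=1$, since there $\varphi(a)=2-2a$. What actually breaks at $p=1$ is Lemma~\ref{lem:3.1}, because the radius function is identically $2$ on $S(AC^1(\Omega))^+$.
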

\begin{proof}
The map $T$ is determined by the following commutative diagram:
\[
\begin{tikzcd}
S(AC^p(\Omega_1))^+ \arrow[r, "\Delta"] \arrow[d, "\mathfrak{D}_1"'] 
& S(AC^p(\Omega_2))^+ \arrow[d, "\mathfrak{D}_2"'] \\
B(L^1(\Omega_1))^+ \arrow[r,dashed, "T"'] \arrow[u, "\mathfrak{I}_1"', shift right=1.2ex]
& B(L^1(\Omega_2))^+ \arrow[u, "\mathfrak{I}_2"', shift right=1.2ex]
\end{tikzcd}
\]
For $j=1,2$, since $\mathfrak{I}_j$, $\mathfrak{D}_j|_{S(AC^p(\Omega_j)^+}$ and $\Delta$ are bijective, the map $T$ is a bijection.

Next, we prove that $\|T(u)\|_1 = \|u\|_1$ for all $u\in B(L^1(\Omega_1))^+$.
Let $u\in B(L^1(\Omega_1))^+$. 
Because $\Delta$ is an isometry and $\Delta(\mathds{1}_1)=\mathds{1}_2$, we have $\|\mathds{1}_1-\mathfrak{I}_1(u)\|_{\A,p}=\|\Delta(\mathds{1}_1)-\Delta \mathfrak{I}_1(u)\|_{\A,p}=\|\mathds{1}_2-\mathfrak{I}_2(T(u))\|_{\A,p}$. 
The last equality above follows from the fact $T=\mathfrak{I}_2^{-1}\circ\Delta\circ\mathfrak{I}_1$ being bijective.
Equivalently, we have
\begin{equation}\label{eq:3.4.1}
\left(1-\left(1-\|u\|_1^p\right)^{1/p}\right)^p+\|u\|_1^p
=
\left(1-\left(1-\|T(u)\|_1^p\right)^{1/p}\right)^p+\|T(u)\|_1^p.
\end{equation}
Define $$\varphi:[0,1]\to\mathbb{R},\qquad \varphi(t)
=
\left(1-\left(1-t^p\right)^{1/p}\right)^p+t^p.
$$
We claim that $\varphi$ is strictly increasing on $[0,1]$.
Indeed, for $0\le s<t\le 1$, we have $s^p<t^p$ and $\left(1-s^p\right)^{1/p}>\left(1-t^p\right)^{1/p}$.
Therefore $\left(1-\left(1-s^p\right)^{1/p}\right)^p
<
\left(1-\left(1-t^p\right)^{1/p}\right)^p$, and, together with $s^p<t^p$, this gives $\varphi(s)<\varphi(t)$.
Thus $\varphi$ is strictly increasing.
The equality~(\ref{eq:3.4.1}) gives $\varphi(\|u\|_1)=\varphi(\|T(u)\|_1)$.
Since $\varphi$ is strictly increasing, we obtain
\begin{equation}\label{eq:3.3.1}
\|T(u)\|_1=\|u\|_1\qquad(u\in B(L^1(\Omega_1))^+).
\end{equation}

We claim that $T$ is an isometry with respect to $L^1$--norm.
Let $u,v\in B(L^1(\Omega_1))^+$.
Since $\Delta$ is an isometry, as well as $\Delta \mathfrak{I}_1(u)=\mathfrak{I}_2(T(u))$ and $\Delta \mathfrak{I}_1(v)=\mathfrak{I}_2(T(v))$,
we have $\|\mathfrak{I}_1(u)-\mathfrak{I}_1(v)\|_{\A,p}=\|\mathfrak{I}_2(T(u))-\mathfrak{I}_2(T(v))\|_{\A,p}$.
Equivalently,
\begin{equation}\label{eq:3.3.2}
|\mathfrak{I}_1(u)(x_1)-\mathfrak{I}_1(v)(x_1)|^p+\|u-v\|_1^p=|\mathfrak{I}_2(T(u))(x_2)-\mathfrak{I}_2(T(v))(x_2)|^p+\|T(u)-T(v)\|_1^p,
\end{equation}
where $x_j$ is the left endpoint of $\Omega_j$, for $j=1,2$, respectively.
Corollary~\ref{cor:2.1} implies that
\begin{equation*}
\begin{cases}
\mathfrak{I}_1(u)(x_1) = (1 - \|u\|^p_1)^{1/p}\\
\mathfrak{I}_1(v)(x_1) = (1 - \|v\|^p_1)^{1/p}
\end{cases}
\quad \text{and}\quad
\begin{cases}
\mathfrak{I}_2(Tu)(x_2) = (1 - \|Tu\|^p_1)^{1/p}\\
\mathfrak{I}_2(Tv)(x_2) = (1 - \|Tv\|^p_1)^{1/p}
\end{cases}.
\end{equation*}
Following equality~(\ref{eq:3.3.1}) as well as above equalities, we have $|\mathfrak{I}_1(u)(x_1)-\mathfrak{I}_1(v)(x_1)|=|\mathfrak{I}_2(T(u))(x_2)-\mathfrak{I}_2(T(v))(x_2)|$.
Hence we obtain $\|T(u)-T(v)\|_1=\|u-v\|_1$ from equality~(\ref{eq:3.3.2}).
Therefore, $T:B(L^1(\Omega_1))^+\to B(L^1(\Omega_2))^+$ is a surjective isometry with respect to $L^1$--norm.

The surjective isometry of $T$ along with equality~(\ref{eq:3.3.1}) implies that $T(S(L^1(\Omega_1))^+) = S(L^1(\Omega_2))^+$ as a surjective isometry.
By the extension theorem for positive unit spheres of $L^1$-space~\cite[Theorem 11]{LNW21}, there exists a unique complex--linear isometric order isomorphism $\Phi:L^1(\Omega_1)\to L^1(\Omega_2)$ such that $\Phi|_{S(L^1(\Omega_1))^+} = T|_{S(L^1(\Omega_1))^+}$.

We now claim that $\Phi|_{B(L^1(\Omega_1))^+} = T$.
Let $u\in B(L^1(\Omega_1))^+$,
Since both $T$ and $\Phi$ are isometries from $B(L^1(\Omega_1))^+$ onto $B(L^1(\Omega_2))^+$, and $\Phi|_{S(L^1(\Omega_1))^+} = T|_{S(L^1(\Omega_1))^+}$, we have $\|\Phi(u)\|_1=\|u\|_1 = \|T(u)\|_1$, and
$\|\Phi(u) - z\|_1=\|u - \Phi^{-1}(z)\|_1 = \|T(u) - T\Phi^{-1}(z)\|_1=\|T(u) - z\|_1$ for each $z\in S(L^1(\Omega_2))^+.$
By Lemma~\ref{lem:3.2}(a), we conclude $\Phi(u) = T(u)$.
Hence, $\Phi|_{B(L^1(\Omega_1))^+} = T$. 
Since $\Phi(B(L^1(\Omega_1))^+) = B(L^1(\Omega_2))^+$, by scaling positive elements, $\Phi$ is order-preserving.
Thus, $\Phi$ is a complex--linear isometric order isomorphism.
\end{proof}

At this stage, we have gathered sufficient ingredients for Theorem~\ref{thm:main}.
The proof is organized into three cases: (1) $p=1$; (2) $1< p<\infty$; and (3) $p=\infty$.

\begin{proof}[\textbf{Proof of Theorem~\ref{thm:main}}]
\textbf{($p=1$)}
For $j=1,2$, by Proposition~\ref{prop:2.1} and Proposition~\ref{prop:2.2}, we define a complex--linear isometric order isomorphism
$$U_j:AC^1(\Omega_j)\to L^1(X_j,\mathfrak{A}_j,\mu_j),\qquad U_j(f) =f(x_j)\chi_{\{\ast_j\}}+f'\chi_{\Omega_j}.$$
From now on, we simply write $L^1(\mu_j)$ for $L^1(X_j,\mathfrak{A}_j, \mu_j)$.
Define $T:=U_2\circ\Delta\circ  U_1|_{S(AC^1(\Omega_1))^+}^{-1}:S(L^1(\mu_1))^+ \to S(L^1(\mu_2))^+$,
then $T$ is a surjective isometry.
The map $T$ is arranged as the following commutative diagram
$$
\begin{tikzcd}[column sep=large,row sep=large]
S(AC^1(\Omega_1))^+
\arrow[r,"\Delta"]
\arrow[d,"U_1"']
&
S(AC^1(\Omega_2))^+
\arrow[d,"U_2"]
\\
S(L^1(\mu_1))^+
\arrow[r, dashed,"T"]
&
S(L^1(\mu_2))^+ .
\end{tikzcd}
$$
By the extension theorem between the positive unit spheres of $L^1$-spaces~\cite[Theorem 11]{LNW21}, $T$ admits a complex--linear isometric order isomorphism $\Phi:L^1(\mu_1) \to L^1(\mu_2)$ such that $\Phi|_{S(L^1(\mu_1))^+} = T$.
The extension procedure is represented by the following commutative diagram
$$
\begin{tikzcd}[column sep=large,row sep=large]
AC^1(\Omega_1)
\arrow[r,dashed, "\widetilde{\Delta}"]
\arrow[d,"U_1"']
&
AC^1(\Omega_2)
\arrow[d,"U_2"]
\\
L^1(\mu_1)
\arrow[r,"\Phi"]
&
L^1(\mu_2).
\end{tikzcd}
$$

Define $\widetilde{\Delta}:= U_2^{-1}\circ\Phi\circ U_1:AC^1(\Omega_1)\to AC^1(\Omega_2)$.
Let $f\in S(AC^1(\Omega_1))^+$, we have $\widetilde{\Delta}(f) = U_2^{-1}\Phi U_1(f) = U_2^{-1}T U_1(f)=\Delta(f)$, and thus $\widetilde{\Delta}|_{S(AC^1(\Omega_1))^+} = \Delta$.

By Corollary~\ref{cor:2.2}, we have $\Phi(\chi_{\{*_1\}})=\chi_{\{*_2\}}$.
Therefore $\Phi$ maps the band $L^1(\Omega_1)$ onto the band $L^1(\Omega_2)$.
Define $\Lambda( u):=\left.\Phi(u\chi_{\Omega_1})\right|_{\Omega_2}$ for each $u\in L^1(\Omega_1)$.
Then $\Lambda:L^1(\Omega_1)\to L^1(\Omega_2)$ is a complex--linear isometric order isomorphism.
For $f\in AC^1(\Omega_1)$, we have $\Phi U_1(f)=f(x_1)\chi_{\{\ast_2\}}+\Lambda(f')\chi_{\Omega_2}$.
Applying $U_2^{-1}$, we obtain
$$
\widetilde{\Delta}(f)(x)=f(x_1)+\int_{x_2}^x \Lambda(f')(t)\,dt,\qquad x\in\Omega_2.
$$

Finally, if $\overline{\Delta}$ is another surjective linear isometry extending $\Delta$, then $\Psi=U_2\circ \overline{\Delta}\circ U_1^{-1}$ is a surjective linear isometry extending $T$. By the uniqueness part of~\cite[Theorem 11]{LNW21}, $\Psi=\Phi$, and hence $\overline{\Delta}=\widetilde{\Delta}$.
This proves uniqueness.

($1<p<\infty$)
Following Proposition~\ref{prop:2.1}, we define a complex--linear isometric order isomorphism
$$J_j:AC^p(\Omega_j)\to\mathbb{C}\oplus_p L^1(\Omega_j),\qquad J_j(f) = (f(x_j), f').$$
Corollary~\ref{cor:2.1} implies that there is a bijection from $S(AC^p(\Omega_j))^+$ onto $B(L^1(\Omega_j))^+$. 
By Lemma~\ref{lem:3.3}, there exists a surjective isometry $T:B(L^1(\Omega_1))^+\to B(L^1(\Omega_2))^+$, and thus an induced map $V:=J_2\circ\Delta\circ J_1^{-1}: \Sigma^+_1\to \Sigma^+_2$ defined by $V(a,u) = (a,T(u))$,
The map $V$ is arranged by the following commutative diagram
$$
\begin{tikzcd}[column sep=large,row sep=large]
S(AC^p(\Omega_1))^+
\arrow[r,"\Delta"]
\arrow[d,"J_1"']
&
S(AC^p(\Omega_2))^+
\arrow[d,"J_2"]
\\
\Sigma^+_1
\arrow[r, dashed,"V"]
&
\Sigma^+_2 .
\end{tikzcd}
$$

Lemma~\ref{lem:3.3} implies that $T$ admits a unique complex--linear isometric order isomorphism $\Phi$ such that $\Phi|_{B(L^1(\Omega_1))^+} = T$, and thus an induced extension $\widetilde{V}:\mathbb{C}\oplus_p L^1(\Omega_1)\to \mathbb{C}\oplus_p L^1(\Omega_2)$ defined by $(a,u)\mapsto(a,\Phi(u))$. 
Now define 
$\widetilde{\Delta}:=J_2^{-1}\circ\widetilde{V}\circ J_1:AC^p(\Omega_1)\to AC^p(\Omega_2).$
Let $f\in S(AC^p(\Omega_1))^+$.
Then $\widetilde{\Delta}(f) = J_2^{-1}\widetilde{V}J_1(f) = J_2^{-1}VJ_1(f)=\Delta(f)$, and hence $\widetilde{\Delta}|_{S(AC^p(\Omega_1))^+} = \Delta$.
The extension procedure is organized by the following commutative diagram
$$
\begin{tikzcd}[column sep=large,row sep=large]
AC^p(\Omega_1)
\arrow[r,dashed, "\widetilde{\Delta}"]
\arrow[d,"J_1"']
&
AC^p(\Omega_2)
\arrow[d,"J_2"]
\\
\mathbb{C}\oplus_p L^1(\Omega_1)
\arrow[r,"\widetilde{V}"]
&
\mathbb{C}\oplus_p L^1(\Omega_2).
\end{tikzcd}
$$
In particular, for each $f\in AC^p(\Omega_1)$ and $x\in\Omega_2$, we have 
$$\widetilde{\Delta}(f)(x) = J_2^{-1}\widetilde{V}(f(x_1), f')(x) = J_2^{-1}(f(x_1), \Phi(f'))(x) = f(x_1)+\int_{x_2}^x\Phi(f')(t)dt.$$
Taking $\Lambda=\Phi$ gives the desired formula.

($p=\infty$)
We adopt the following strategies.
Firstly, we observe that the set $\Sigma^+_j=\{(a,u)\in\mathbb{R}^+\oplus L_\mathbb{R}^1(\Omega_j)^+:\|(a,u)\|_{\oplus,\infty}=1\}$ is the union of 
\begin{align*}
P_j&:=\{(a,u)\in\mathbb{R}^+\oplus L_\mathbb{R}^1(\Omega_j)^+: a =1, \|u\|_1\le 1\}=\{(1,u):u\in B(L^1(\Omega_j))^+\},\quad\text{and}\\
Q_j&:=\{(a,u)\in\mathbb{R}^+\oplus L_\mathbb{R}^1(\Omega_j)^+:a\le1, \|u\|_1=1\}=\{(a,u):a\in [0,1],u\in S(L^1(\Omega_j))^+\}.
\end{align*}
Define $\Gamma:=J_2\circ\Delta\circ J_1|_{S(AC^\infty(\Omega_1))^+}^{-1}:\Sigma^+_1\to \Sigma^+_2$.
Then $\Gamma$ is a surjective isometry.
Let $(a,u)\in \Sigma^+_1$ and $\Gamma(a,u) =(b,w )\in \Sigma^+_2$.
Since $\Gamma$ is a surjective isometry, following Remark~\ref{rmk:rho}, $\rho(a,u) = \|u\|_1+1$, we have $\rho(a,u) = \rho(\Gamma(a,u))$, and thus $\|u\|_1+1 = \|w\|_1+1$.
Consequently, $\Gamma$ preserves the norm of second--coordinate.
Therefore, $\Gamma(Q_1) = Q_2$.
Moreover, $\Gamma(Q_1)=Q_2$ along with $\Gamma(\Sigma^+_1) = \Sigma^+_2$ implies that $\Gamma(\Sigma^+_1\setminus Q_1) = \Sigma^+_2\setminus Q_2$.
Indeed, $\Sigma^+_j\setminus Q_j=\{(1,u):u\in B(L^1(\Omega_j))^+\setminus S(L^1(\Omega_j))^+\}$, and thus $\overline{\Sigma^+_j\setminus Q_j} = \{(1,u):u\in B(L^1(\Omega_j))^+\} = P_j$.
Because closure property is carried by isometries,  $\Gamma(P_1) =P_2$.

Now we characterize the extension procedure of $P_j$ and $Q_j$, respectively.
Since $\Gamma(P_1) = P_2$, there exists a surjective isometry $T_P:B(L^1(\Omega_1))^+\to B(L^1(\Omega_2))^+$ such that $\Gamma|_{P_1}:P_1\to P_2$ defined by $(1,u)\mapsto (1,T_P(u))$ for each $u\in B(L^1(\Omega_1))^+$.
Note that $T_P$ is a surjective isometry with respect to $L^1$-norm and $T_P(0) = 0$ because $\Gamma|_{P_1}$ is a surjective isometry.
Then following the construction of $\Phi$ in Lemma~\ref{lem:3.3}, there exists a unique complex--linear isometric order isomorphism $\Phi:L^1(\Omega_1)\to L^1(\Omega_2)$ such that $\Phi|_{B(L^1(\Omega_1))^+} = T_P$, and hence a unique extension $\widetilde{\Gamma}(1,u) = (1,\Phi(u))$ for each $u\in L^1(\Omega_1)$.

We have characterized the unique extension of $P_j$, including on $P_j\cap Q_j$.
We now extend the procedure to $Q_j$ via $P_j\cap Q_j$.
Consider $(a,u)\in Q_1$ and $(1,z)\in P_2\cap Q_2$, we write $\Gamma(a,u)=(b,v)$.
Then $u\in S(L^1(\Omega_1))^+$ and $v,z\in S(L^1(\Omega_2))^+$.
Since $\Gamma$ is a surjective isometry, and $\Gamma(1,\Phi^{-1}(z)) = (1,z)$, we have
\begin{equation}\label{eq:thm:1}
\max\{1-a, \|u-\Phi^{-1}(z)\|_1\} = \max\{1-b,\ \|v-z\|_1\}.
\end{equation}
Also, $\Phi:L^1(\Omega_1)\to L^1(\Omega_2)$ is a surjective isometry, we have
\begin{equation}\label{eq:thm:2}
\max\{1-a, \|u-\Phi^{-1}(z)\|_1\} = \max\{1-a, \|\Phi(u)-z\|_1\},
\end{equation}
Thus, equalities~(\ref{eq:thm:1}) and (\ref{eq:thm:2}) are identified as $\max\{1-b, \|v-z\|_1\} = \max\{1-a, \|\Phi(u)-z\|_1\}$.
Since $z\in S(L^1(\Omega_2))^+$ is arbitrary, and hence choosing $z=v$, we have 
$$1-b=\max\{1-b,\|v-v\|_1\} = \max\{1-a,\|\Phi(u) - v\|_1\}\ge 1-a.$$
In the chosen $z=\Phi(u)$, we have
$$1-a = \max\{1-a, \|\Phi(u)-\Phi(u)\|_1\} = \max\{1-b,\|v-\Phi(u)\|_1\}\ge 1-b.$$
We conclude that $a=b$, and hence
$\max\{1-a, \|v-z\|_1\} = \max\{1-a, \|\Phi(u)-z\|_1\}$.
Therefore, $v=\Phi(u)$ by Lemma~\ref{lem:3.2} (b).
That is, $\Gamma(a,u)=(a,\Phi(u))$ for each $(a,u)\in Q_1$.
Since $\Sigma^+_j=P_j\cup Q_j$, the unique extension $\widetilde{\Gamma}$ obtained in $P_j$ part is written as
$$\widetilde{\Gamma}:\mathbb{C}\oplus_\infty L^1(\Omega_1)\to\mathbb{C}\oplus_\infty L^1(\Omega_2)\qquad\text{by}\qquad \widetilde{\Gamma}(a, u) = (a,\Phi(u))\quad ((a,u)\in\mathbb{C}\oplus_\infty L^1(\Omega_1)),$$
such that $\widetilde{\Gamma}|_{\Sigma^+_1} = \Gamma$.

Define $\widetilde{\Delta}:= J_2^{-1}\circ\widetilde{\Gamma}\circ J_1: AC^\infty(\Omega_1)\to AC^\infty(\Omega_2)$.
Let $f\in S(AC^\infty(\Omega_1))^+$, then we have $\widetilde{\Delta}(f) = J_2^{-1}\widetilde{\Gamma}J_1(f) = J_2^{-1}\Gamma J_1(f) = \Delta(f)$, and thus $\widetilde{\Delta}|_{S(AC^\infty(\Omega_1))^+} = \Delta$.
In particular, for $f\in AC^\infty(\Omega_1)$,
$$\widetilde{\Delta}(f)(x) = J_2^{-1}\widetilde{\Gamma}(f(x_1),f')(x) = J_2^{-1}(f(x_1), \Phi(f'))(x) = f(x_1)+\int_{x_2}^x \Phi(f')(t)dt\qquad x\in\Omega_2.$$
Taking $\Lambda=\Phi$ gives the desired formula.

For each $1< p\le \infty$, the uniqueness of $\widetilde{\Delta}$ is proved as follows.
Suppose $\overline{\Delta}$ is another extension, then $J_2\circ\overline{\Delta}\circ J_1^{-1}$ is another extension of $\Gamma$ ($V$, in $p\neq\infty$), contradicting the uniqueness of~\cite[Theorem 11]{LNW21}.
Therefore, $\widetilde{\Delta}$ is the unique complex--linear isometric order isomorphism extension of $\Delta$.
\end{proof}

The proof of Theorem~\ref{thm:main} highlights two features of the present approach.

First, we emphasize the separation into three cases.
The three cases $p=1$, $1<p<\infty$, and $p=\infty$ require different arguments because the relevant metric properties change at the two endpoints $p=1$ and $p=\infty$.
For $p=1$, the radius function $\rho$ used in Lemma~\ref{lem:3.1} does not distinguish the constant function $\mathds{1}$ from the other elements of $S(AC^1(\Omega))^+$.
We realize $AC^1(\Omega)$ as an $L^1$--space instead.
For $p=\infty$, the restriction $\mathfrak{D}|_{S(AC^\infty(\Omega))^+}$ loses injectivity, and hence we decompose $\Sigma^+=P\cup Q$.
On $P$, the scalar coordinate is fixed, and hence the derivative coordinate determines the point uniquely.
The set $Q$ contains precisely the points for which the derivative alone leaves the scalar coordinate undetermined.
This decomposition therefore allows the two parts of $\Sigma^+$ to be treated separately.

Second, we overcome the structural obstructions that prevent the argument of~\cite{C1LIP} from applying to $AC^p(\Omega)$.
The proof replaces the convex--body argument by a different extension mechanism.
For $1<p\leq\infty$, we first apply the extension theorem for positive unit spheres of $L^1$--spaces~\cite[Theorem 11]{LNW21} and then use the metric separation properties established in Lemma~\ref{lem:3.2}.
When $1<p<\infty$, Lemma~\ref{lem:3.2}(a) shows that the resulting complex--linear isometric order isomorphism agrees with the induced isometry on the whole positive unit ball.
When $p=\infty$, Lemma~\ref{lem:3.2}(b) shows that the distance relations with points of $P\cap Q$ uniquely determine the remaining $L^1$--coordinate.
Thus, in both cases, the extension is obtained from the $L^1$--sphere extension theorem together with the separation principle of Lemma~\ref{lem:3.2}.

\section{Application to phase--isometries}
In this section, the main Theorem~\ref{thm:main} will be utilized in phase--isometries.
Let $A, B$ be non--empty subsets of normed spaces $E_1$ and $E_2$, respectively.
A map $T:A\to B$ is a \textit{phase--isometry} if 
$$\Big\{\|x-\alpha y\|_{E_1}:\alpha\in\mathbb{T}\Big\} = \Big\{\|T(x) - \alpha T(y)\|_{E_2}:\alpha\in\mathbb{T}\Big\}\qquad (x,y\in A).$$
Phase-isometries have been investigated in~\cite{SSD23,IOT21}.
As an application of Theorem~\ref{thm:main}, we prove the following result.
\begin{thm}
    Suppose $T:S(AC^p(\Omega_1))^+\to S(AC^p(\Omega_2))^+$ is a surjective phase--isometry.
    Then there exists a unique complex--linear isometric order isomorphism $\widetilde{T}:AC^p(\Omega_1)\to AC^p(\Omega_2)$ such that $\widetilde{T}|_{S(AC^p(\Omega_1))^+} = T$.
    In particular, 
    $$\widetilde{T}(f)(x) = f(x_1)+ \int_{x_2}^x \Lambda (f')(t) dt,\qquad x\in\Omega_2,\quad f\in AC^p(\Omega_1),$$
    where $\Lambda:L^1(\Omega_1)\to L^1(\Omega_2)$ is the unique complex--linear isometric order isomorphism determined by $T$.
\end{thm}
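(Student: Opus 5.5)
The plan is to show that a surjective phase--isometry between positive unit spheres is automatically an isometry, and then to apply Theorem~\ref{thm:main}. The reason this should work is that on the positive cone the distance $\|f-\alpha g\|_{\A,p}$ over $\alpha\in\mathbb{T}$ is minimized at $\alpha=1$. So the phase condition will pin down the actual distance $\|f-g\|_{\A,p}$ as the minimum of the set.

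\textbf{Step 1 (minimum at $\alpha=1$).} Let $f,g\in S(AC^p(\Omega))^+$. Via $J$ from Proposition~\ref{prop:2.1}, write $f=(a,u)$ and $g=(b,v)$ with $a,b\ge 0$ and $u,v\ge 0$ a.e. For $\alpha\in\mathbb{T}$ we have pointwise
$$|a-\alpha b|\ge |a-b|,\qquad |u(t)-\alpha v(t)|\ge |u(t)-v(t)|,$$
since for nonnegative reals $s,r$ we have $|s-\alpha r|^2=s^2+r^2-2sr\,\mathrm{Re}\,\alpha\ge (s-r)^2$. Integrating gives $\|u-\alpha v\|_1\ge\|u-v\|_1$. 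Because $\|\cdot\|_{\oplus,p}$ is monotone in both coordinates for every $1\le p\le\infty$, we get $\|f-\alpha g\|_{\A,p}\ge\|f-g\|_{\A,p}$. Hence
$$\min\{\|f-\alpha g\|_{\A,p}:\alpha\in\mathbb{T}\}=\|f-g\|_{\A,p}.$$

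\textbf{Step 2 (phase--isometry is an isometry).} Since $T$ maps into the positive sphere, Step~1 applies on both sides. Equality of the two sets $\{\|f-\alpha g\|:\alpha\in\mathbb{T}\}$ and $\{\|T(f)-\alpha T(g)\|:\alpha\in\mathbb{T}\}$ then forces their minima to agree, that is, $\|T(f)-T(g)\|_{\A,p}=\|f-g\|_{\A,p}$. So $T$ is a surjective isometry $S(AC^p(\Omega_1))^+\to S(AC^p(\Omega_2))^+$.

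\textbf{Step 3 (extension and uniqueness).} Theorem~\ref{thm:main} now gives a unique complex--linear isometric order isomorphism $\widetilde{T}$ extending $T$, together with the stated integral formula involving $\Lambda$. For uniqueness among all candidate extensions, note that any complex--linear isometric order isomorphism agreeing with $T$ on the positive sphere is itself an extension of the isometry $T$. The uniqueness clause of Theorem~\ref{thm:main} therefore applies directly.

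The only point that needs care is Step~1, namely checking that the pointwise inequality survives the norm for $p=\infty$ as well as for finite $p$. This reduces to monotonicity of $\max$ and of $(s^p+r^p)^{1/p}$ in each argument, so I expect no real obstacle.
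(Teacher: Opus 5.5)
Your proposal is correct and follows essentially the same route as the paper. Both use the pointwise estimate $|s-\alpha r|\ge |s-r|$ for $s,r\ge 0$ to show that the minimum over $\alpha\in\mathbb{T}$ is attained at $\alpha=1$, which makes the phase--isometry a surjective isometry, and then invoke Theorem~\ref{thm:main} for existence, uniqueness and the integral formula.
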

\begin{proof}
For $f,g\in S(AC^p(\Omega_1))^+$, and each $1\le p\le \infty$, we observe that $\|f-g\|_{\A,p}\le \|f-\alpha g\|_{\A,p}$ for all $\alpha\in\mathbb{T}$.
To show this, firstly, we have $|f(x_1) - \alpha g(x_1)|^2 - |f(x_1) - g(x_1)|^2 = 2f(x_1)g(x_1)(1-\mathrm{Re}(\alpha))\ge 0$, and hence $|f(x_1)-g(x_1)|\le |f(x_1)- \alpha g(x_1)|$.
The same argument on their derivative gives $|f'(t) - g'(t)|\le |f'(t) - \alpha g'(t)|$ for almost every $t\in\Omega_1$.
By integrating over $\Omega_1$, we have $\|f'-g'\|_1\le \|f'-\alpha g'\|_1$.
Substituting into $p$--norm~(\ref{p_norm}), we obtain the inequality $\|f-g\|_{\A, p}\le \|f-\alpha g\|_{\A, p}$.

Similarly, $\|T(f)-T(g)\|_{\A,p}\le \|T(f) -\alpha T(g)\|_{\A,p}$ for such $f$ and $g$.
Thus,
\begin{align*}
    \|T(f)-T(g)\|_{\A,p} &= \min\{\|T(f)-\alpha T(g)\|_{\A,p}:\alpha\in\mathbb{T}\}\\
    &=\min\{\|f-\alpha g\|_{\A,p}:\alpha\in\mathbb{T}\}\\
    &=\|f-g\|_{\A,p}.
\end{align*}
The second equality above follows from the fact that $T$ is a phase-isometry.
Therefore, $T$ is a surjective isometry.
By Theorem~\ref{thm:main}, the desired conclusion follows.
\end{proof}

\section*{Acknowledgments}
The author would like to thank Professor Takeshi Miura for his valuable advice.
The author was supported by the National Science and Technology Council, Taiwan (NSTC), under Grant No. 115--2917--I--110--009.

\end{document}